\newtheorem{theorem}{Theorem}[section]
\theoremstyle{definition}
\theoremstyle{remark}
\newtheorem*{remark}{Remark}
\title{Survey and remarks on Viro's definition \\ of Khovanov homology}
\author{\textsc{Noboru Ito}\footnote{Department of Mathematics, Waseda University, Tokyo 169-8555, Japan.\newline e-mail: \texttt{noboru@moegi.waseda.jp}}}
\begin{document}
%

\maketitle

\begin{abstract}      
This paper reviews and offers remarks upon Viro's definition of the Khovanov homology of the Kauffman bracket of unoriented framed tangles (Sec. \ref{def_kh}).  The review is based on a file of his talk.  This definition contains an exposition of the relation between the $R$-matrix and the Kauffman bracket (Sec. \ref{unoriented}).  
\end{abstract}

\section{Introduction}
The Khovanov homology, in which bigraded groups are link invariants, was introduced by Khovanov \cite{khovanov0}, and ever since, much research effort has been developed to exploring this homology.  Today, we can find several interpretations of the Khovanov homology.  Here, we will explore Viro's definition of the framed Khovanov homology in \cite{v} and \cite{v2}.  We have chosen this because the file \cite{v2} of Viro's talk maintains that there is a direct correspondence among generators of the framed Khovanov homology and the element of the $R$-matrix of the Kauffman bracket, a view that has not been seen elsewhere.  As is well-known, the $R$-matrix of the Jones polynomial is made available by a normalization of that of the Kauffman bracket.  

The outline of this paper is as follows.  Sec. \ref{def_kh} is concerned with Viro's definition of the framed Khovanov homology with regard to the Kauffman bracket.  In Sec. \ref{def_kh_l}, we define the framed Khovanov homology for the Kauffman bracket of framed links.  In Sec. \ref{unoriented}, we extend the definition of the framed Khovanov homology to that of unoriented framed tangles via the following three steps.  First, we generalize the generators of a free abelian group, which become complex and are suited to tangles (Sec. \ref{generalized_state}).  Second, we define homological grading and $A$-grading for the complex (Sec. \ref{grading}).  Third, we define a boundary operator $\partial$ for the complex, and with this, complete the definition of the framed Khovanov homology (Sec. \ref{boundary}).  We explain how we extend the proofs that $\partial^{2}$ $=$ $0$ and the homotopy equivalence of this framed Khovanov homology, which implies invariance under an isotopy of tangles using only elementary techniques.  In Sec. \ref{r_matrix}, we observe the relation between the $R$-matrix and the framed Khovanov homology as defined in Secs. \ref{def_kh_l} and \ref{unoriented}.

\section{Viro's definition of framed Khovanov homology of Kauffman bracket}\label{def_kh}
\subsection{Khovanov homology for framed links.}\label{def_kh_l}
In this section, we recall the definition of the {\it{framed Khovanov homology}} of the Kauffman bracket given by Viro \cite{v}.  Many definitions related to the Khovanov complex are quoted from \cite{v} (e.g., Secs. 4.4, 5.4.C, 6.1).  Now let us consider a framed link diagram $D$ by means of blackboard framing.  By the abuse of notation, when there is no danger of confusion, we write a link diagram to denote a framed link diagram.  To begin with, as written in Fig. \ref{mark_sth_mark}, we place a small edge, called a {\it{marker}}, for every crossing on the link diagram.  In the rest of this paper, we can use the simple notation as in Fig. \ref{mark_sth_mark}.  Every marker has its sign, as defined in Fig. \ref{mark_sth_mark}.  That is, we make Kauffman states using these signed markers, smoothing all the crossings of a link diagram along all the markers\footnote{Note that, in Viro's paper \cite{v}, the (Kauffman) state is a distribution of markers.  } (Fig. \ref{mark_sth}).
\begin{figure}
\qquad\qquad
\includegraphics[width=10cm
]{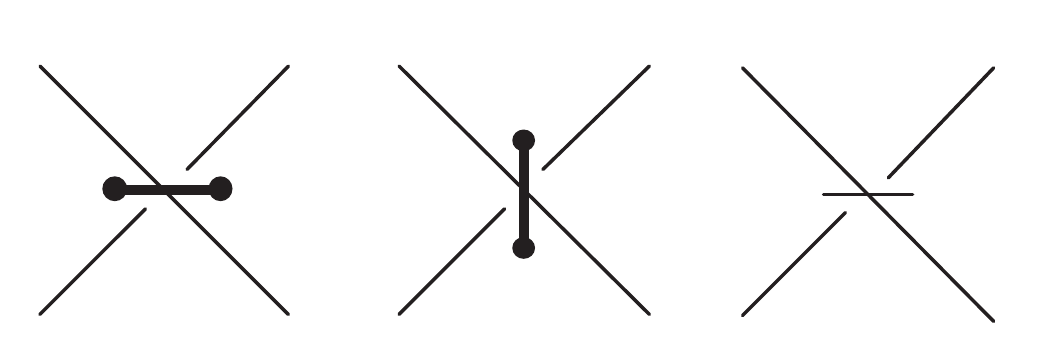}
\caption{A positive marker, a negative marker, and a simple notation for a positive marker from the left-hand side.}\label{mark_sth_mark}
\end{figure}
\begin{figure}
\qquad\qquad
\includegraphics[width=10cm]{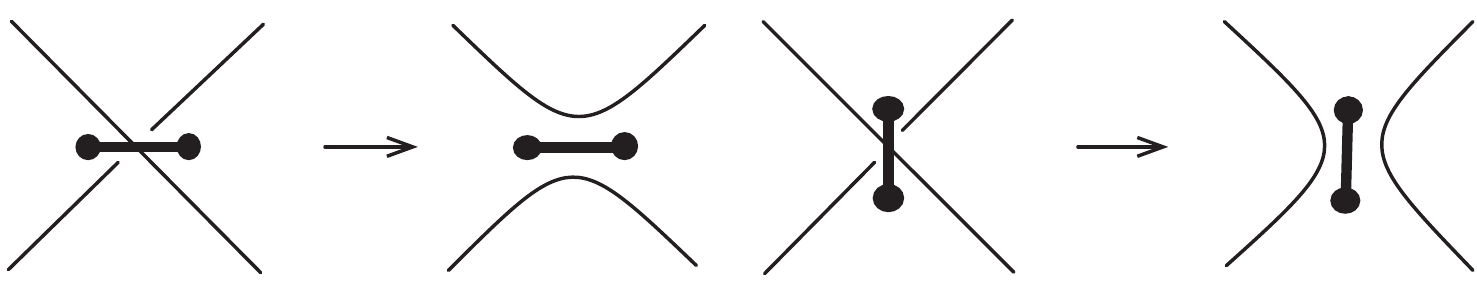}
\caption{Smoothing producing Kauffman states.  The marker on the crossing in the left figure is the positive marker; that in the right figure is the negative marker.}\label{mark_sth}
\end{figure}
We give an order of negative markers of a Kauffman state, which are considered up to an even permutation.  We say that orientations are opposite (resp. same) if they differ by an odd (resp. even) permutation for two orders of negative markers.  Now, we consider the relation among the Kauffman states such that one state is equal to another state multiplied by $-1$ (resp. $1$) if two Kauffman states have opposite (resp. same) orientations \cite[Sec. 5.4.C]{v}.  Next, we put an $x$ or $1$ for each circle of the Kauffman state.  Lets the degree of $x$ be $-1$ and that of $1$ be $1$, following \cite{v2}\footnote{In \cite{v}, Viro uses signs that replace $+$ (resp. $-$) with $1$ (resp. $x$)}.  We denote degrees of $x$ or $1$ by ${\rm{deg}}(x)$ or ${\rm{deg}}(1)$.  The Kauffman state whose circles have $x$ or $1$ with the relation defined by the orientation of the negative makers of the Kauffman state is called the {\it{enhanced Kauffman state}}, as denoted by $S$.  Let $\sigma(S)$ be the sum of the signs of the markers in $S$ and $\tau(S)$ $=$ $\sum_{{\text{circles}}\,y\,{\text{in}}\,S} {\mathrm{deg}} (y)$.  Here ${\rm{deg}}(y)$ $=$ ${\rm{deg}}(x)$ or ${\rm{deg}}(1)$, that is, $-1$ or $1$.  Following \cite[Section 6.1]{v}, we obtain
\begin{equation}
\langle D \rangle = \sum_{{\text{enhanced states}}\,S\,{\text{of}}\,D} (-1)^{\tau(S)} A^{\sigma(S) - 2 \tau(S)}, 
\end{equation}
where $\langle D \rangle$ denotes the Kauffman bracket for $D$.  Here we note that $\langle$unknot with $0$-framing$\rangle$ $=$ $-A^{2}$ $-A^{-2}$.  Let $p(S)$ $=$ $\tau(S)$ and $q(S)$ $=$ $\sigma(S)$ $-$ $2\tau(S)$.  Denote by ${\mathcal{C}}_{p, q}(D)$ the free abelian group generated by the enhanced Kauffman states $S$ of a link diagram $D$ with $p(S)$ $=$ $p$ and $q(S)$ $=$ $q$ which is quotiented by the relation such that the same enhanced Kauffman states with opposite orientations differ by multiplication by $-1$ \cite[Sec. 4.4]{v}.  Let $T$ be an enhanced Kauffman state given by replacing a neighborhood of only one crossing with a positive marker, with that of a negative marker, if the neighborhood in each of the cases is as listed in Fig. \ref{frobenius_fig}.  For an enhanced Kauffman state $S$, $\partial(S)$ is defined by 
\begin{equation}\label{def_diff}
\partial(S) = \sum_{{\text{oriented enhanced Kauffman states}}~T} (S : T)\,T
\end{equation}
where the number $(S : T)$ is $1$ in each of the cases listed in Fig. \ref{frobenius_fig} if the orientations of the negative markers of $S$ and $T$ coincide on the common markers.  This is followed by the changing the crossing in the ordering for $T$ \cite[Sec. 5.4.C]{v}, and $(S : T)$ is $0$ if $S$ does not appear to the left of the arrows in Fig. \ref{frobenius_fig}.  
\begin{figure}
\qquad\qquad
\includegraphics[width=10cm]{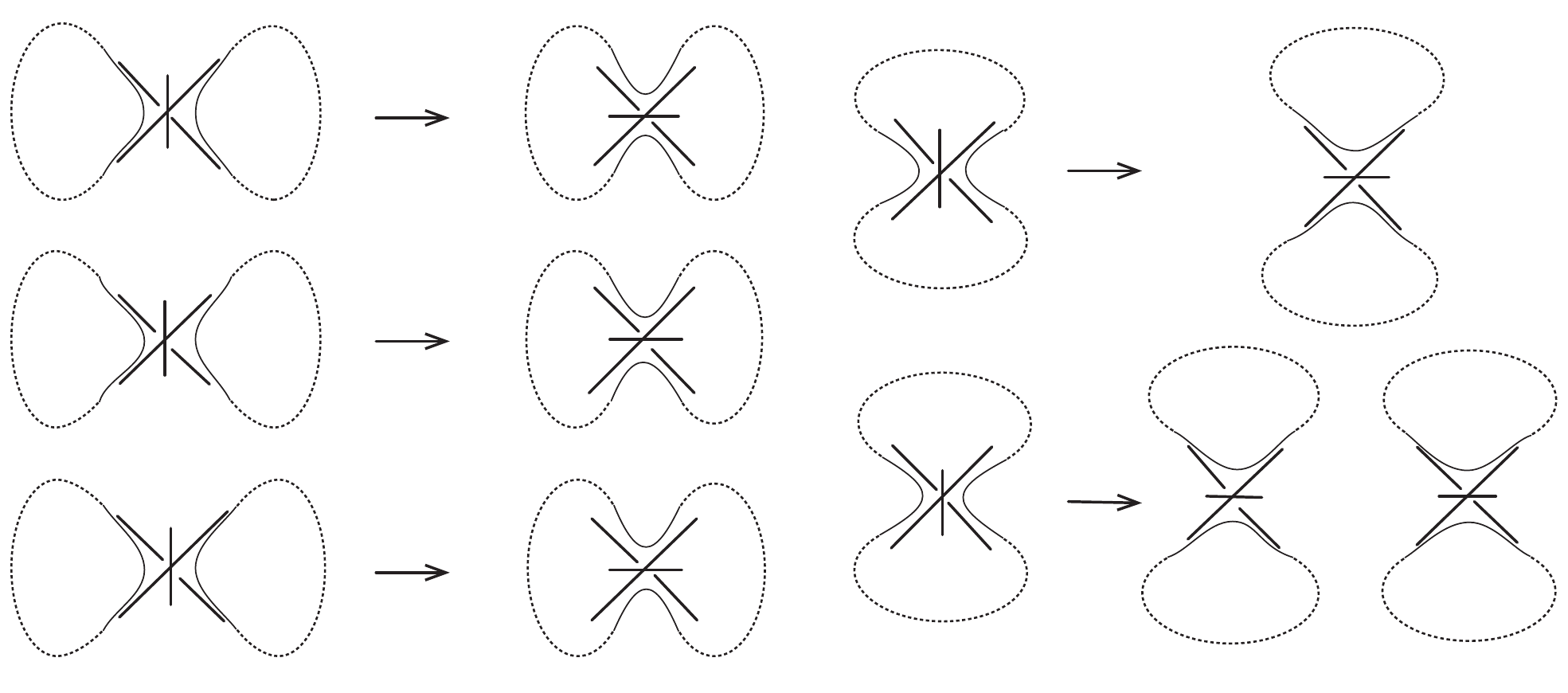}
\begin{picture}(0,10)
\put(-260,123){$S$}
\put(-173,123){$T$}
\put(-120,123){$S$}
\put(-45,123){$T$}
\put(-278,18){$1$}
\put(-278,61){$1$}
\put(-278,103){$x$}
\put(-242,18){$1$}
\put(-242,61){$x$}
\put(-242,103){$1$}
\put(-185,103){$x$}
\put(-185,61){$x$}
\put(-185,18){$1$}
\put(-120,45){$1$}
\put(-120,105){$x$}
\put(-45,108){$x$}
\put(-45,72){$x$}
\put(-67,48){$x$}
\put(-66,14){$1$}
\put(-25,48){$1$}
\put(-24,15){$x$}
\put(-46,32.5){$+$}
\end{picture}
\caption{Each of the figures to the left of the arrows is $S$, those to the right are $T$ for (\ref{def_diff}).  The arrow in the lower left implies that $S$ produces two types of $T$.}\label{frobenius_fig}
\end{figure}
The map for enhanced Kauffman states is extended to the homomorphism $\partial : {\mathcal{C}}_{p, q}$ $\to$ $\mathcal{C}_{p - 1, q}$.  The homomorphism $\partial$ satisfies $\partial^{2}$ $=$ $0$ (this fact is non-trivial, see \cite[Sec. 5.4.D]{v}) and then the $\partial$ becomes a boundary operator.  The {\it{framed Khovanov complex of the Kauffman bracket}} is defined as the complex $\{{\mathcal{C}}_{p, q}(D), \partial \}$.  The {\it{framed Khovanov homology}} defined by this complex is denoted by $H_{*, *}$.  For this framed Khovanov homology, we have
\begin{equation}\label{euler_ch}
\begin{split}
\langle D \rangle &= \sum_{q} A^{q} \sum_{p} (-1)^{p} {\mathrm{rank}}\,{\mathcal{C}}_{p, q}(D)\\
&= \sum_{q} A^{q} \sum_{p} (-1)^{p} {\mathrm{rank}}\,H_{p, q}(D).  
\end{split} 
\end{equation}

\begin{theorem}[Khovanov]
Let $D$ be a framed link diagram of an unoriented framed link $L$.  Then the homology $H_{*}({\mathcal{C}}_{*, *}(D), \partial)$ is an invariant of $L$.  
\end{theorem}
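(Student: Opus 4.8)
The plan is to prove invariance by showing that the bigraded homology is unchanged under the moves that generate framed-link isotopy of blackboard-framed diagrams, namely the second and third Reidemeister moves together with the framed form of the first move, in which the writhe, and hence the framing, is preserved. Since any two diagrams $D$ and $D'$ of the same unoriented framed link $L$ are connected by a finite sequence of such moves, it suffices to treat a single move performed inside a small disk. Because each $\mathcal{C}_{p,q}(D)$ is generated by enhanced Kauffman states built from local smoothings, and because the boundary operator $\partial$ of (\ref{def_diff}) is local (the merge and split operations of Fig.~\ref{frobenius_fig} act only on the circles meeting the disk), the states and the differential outside the disk are carried along unchanged, and the whole problem localizes to the tangle inside the disk.

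For each move I would then exhibit an explicit chain homotopy equivalence $\mathcal{C}_{*,*}(D)\simeq\mathcal{C}_{*,*}(D')$ preserving both the homological grading $p$ and the $A$-grading $q$, so that it descends to an isomorphism $H_{p,q}(D)\cong H_{p,q}(D')$, which is exactly the assertion of the theorem. The construction rests on one elementary mechanism: whenever a component of $\partial$ between two state summands is an isomorphism (such a component arises from the rank-one pieces of the merge and split maps of Fig.~\ref{frobenius_fig}, for example $x\mapsto x\otimes x$ or $1\otimes 1\mapsto 1$), that pair is an acyclic direct summand of the complex and may be cancelled, leaving a strictly smaller complex that is chain homotopy equivalent to the original. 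For the second Reidemeister move the local resolution states organize themselves so that two of them cancel in this way, and the residual complex is identified with that of the simpler diagram; I would record the retraction $s$, the inclusion $r$, and the homotopy $h$ satisfying $\partial h+h\partial=\mathrm{id}-sr$ explicitly.

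The third Reidemeister move is the main obstacle. After cancelling the acyclic summands on each side, one must match the two reduced complexes, and here the sign conventions become delicate: each enhanced state carries an ordering of its negative markers taken up to even permutation, and states of opposite orientation are identified after multiplication by $-1$, so the cancellation maps, the retraction, and the homotopy must all be chosen to respect this orientation relation and to make the identity $\partial h+h\partial=\mathrm{id}-sr$ hold with consistent signs. Tracking how each elementary move permutes the ordered negative markers, and checking that the induced signs agree across all the local states entering the third move, is where the genuine work lies; I expect this bookkeeping, rather than any conceptual difficulty, to be the crux of the argument.

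Finally, the framed first Reidemeister move is handled by the same cancellation: the pair of oppositely signed curls contributes a canonically acyclic summand, and because the two curls have opposite sign the shifts in $\sigma(S)$ and $\tau(S)$, and hence in $p$ and $q$, cancel, so the resulting equivalence is again bigrading preserving. Throughout I would invoke $\partial^{2}=0$, assumed above, to guarantee that the summands being removed are genuine acyclic subcomplexes, and I would use the identity (\ref{euler_ch}) as a consistency check, since the graded Euler characteristic is the regular-isotopy invariant Kauffman bracket $\langle D\rangle$ and must therefore be preserved by each move.
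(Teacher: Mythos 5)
Your proposal is correct and follows essentially the same route as the paper, whose entire proof is a one-line deferral to the standard chain-homotopy argument for unframed links in \cite{ito}: invariance is established move by move via explicit chain homotopy equivalences (the maps satisfying $\partial h + h\partial = \mathrm{id} - \mathrm{in}\circ\rho$ that reappear in the paper's proof of Theorem \ref{kh_tan}), which is exactly what your acyclic-summand cancellation produces. Your explicit treatment of the balanced pair of opposite curls (the framed first move) and of the sign bookkeeping for orderings of negative markers fills in details the paper leaves implicit, but it is the same strategy, not a different one.
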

\begin{proof}
The proof is essentially the same computation as for the case of unframed links (e.g. \cite{ito}).  
\end{proof}

\subsection{Khovanov homology for unoriented framed tangles.}\label{unoriented}

\subsubsection{Generalized states.}\label{generalized_state}
In this section, we will extend the enhanced Kauffman states of links to those of tangles \cite{v2}.  First, for all the crossings of a given tangle, we assign markers, as described in Sec. \ref{def_kh}.  After we smoothen all the crossings along all the markers, we have a set of arcs or circles (Fig. \ref{general_state}), which is also called the {\it{Kauffman state}} $s$ here.  
\begin{figure}
\qquad\qquad
\includegraphics[width=10cm]{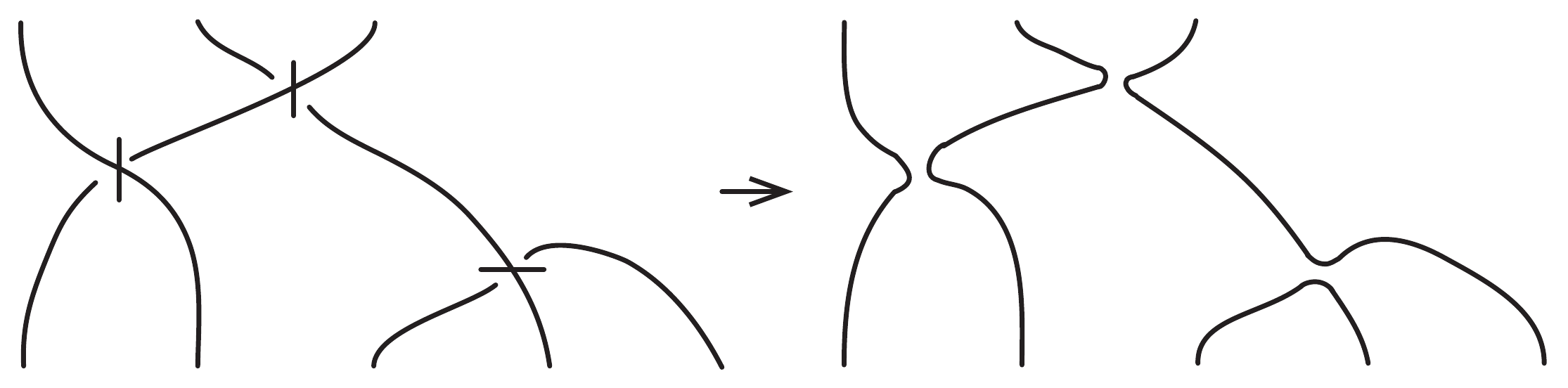}
\caption{Example of the Kauffman state $s$ obtained by smoothing along markers.}\label{general_state}
\end{figure}
\begin{figure}
\qquad\qquad
\includegraphics[width=10cm]{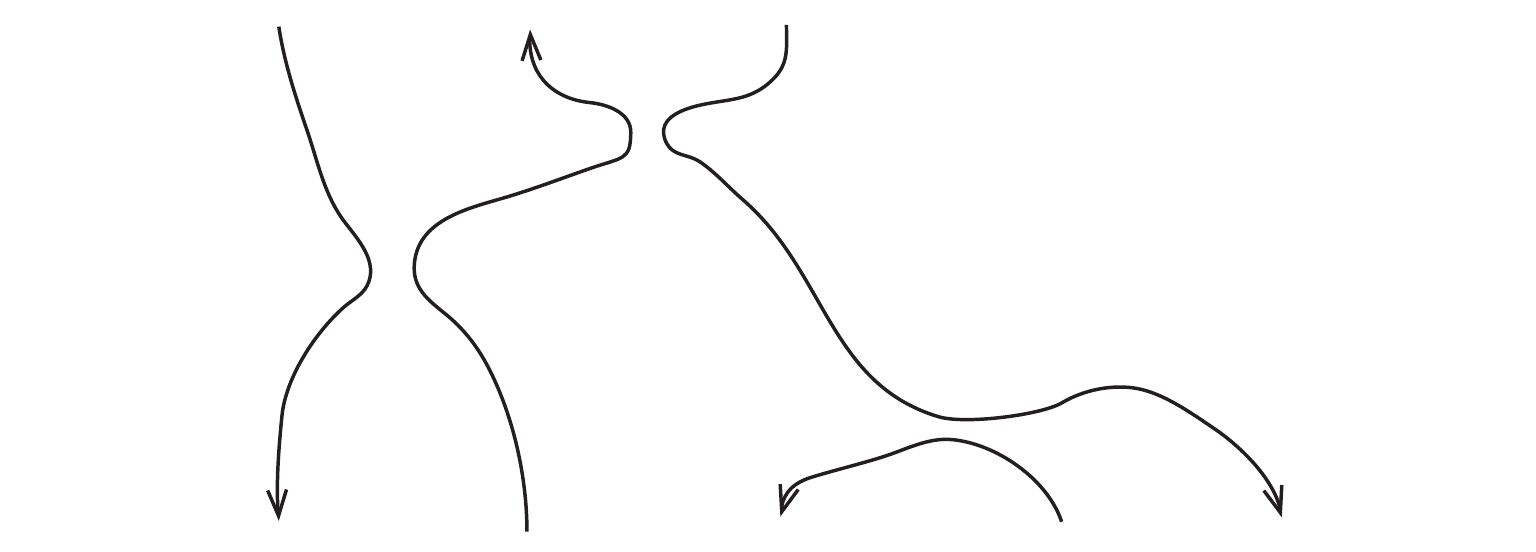}
\caption{Example of the enhanced Kauffman state $S$ corresponding to Fig. \ref{general_state}.}\label{enhanced_state}
\end{figure}
We assign an arbitrary orientation to each arc of the Kauffman state $s$ considering any possibility.  The Kauffman state whose arcs and circles have orientations is also called the {\it{enhanced Kauffman state}} $S$ here (Fig. \ref{enhanced_state}).  If we intuitively consider the case of the links, we see that $x$ and $1$ correspond to two ways of assigning orientations.  

\subsubsection{Homology grading and $A$-grading.}\label{grading}
Let us redefine $p(S)$ $=$ $\tau(S)$.  Let $\tau(S)$ be the degree of the Gauss map of $S$ be evaluated as the average of the local degree at $\pm 1 \in S^{1}$.  In other words, $\tau(S)$ is the sum of the local degree as defined in Fig. \ref{local_degree}.  
\begin{figure}
\qquad\qquad
\includegraphics[width=10cm]{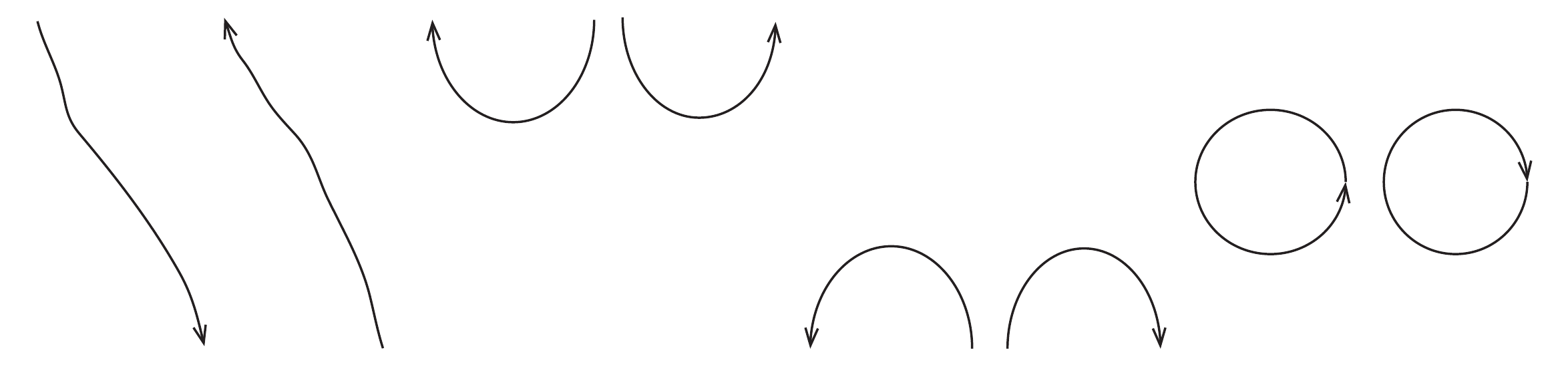}
\caption{Local degrees of Gauss maps.  From the left, the degree is $0$, $0$, $- \frac{1}{2}$, $\frac{1}{2}$, $\frac{1}{2}$, $- \frac{1}{2}$, $1$, and $- 1$.  The degrees $-1$ and $1$ correspond to those of $x$ and $1$.}\label{local_degree}
\end{figure}
The $A$-grading $q(S)$ is defined by the same formula $q(S)$ $=$ $\sigma(S)$ $-$ $2\tau(S)$ as in Sec. \ref{def_kh_l}.  By using ordering markers as in Sec. \ref{def_kh_l}, the $\mathbb{Z}$-module generated by the enhanced Kauffman states of tangles satisfies $p(S)$ $=$ $p$ and $q(S)$ $=$ $q$ and we denote the bigraded module by $\mathcal{C}_{p, q}(D)$ for an arbitrary framed tangle diagram $D$.  

\subsubsection{The Kauffman bracket of unoriented framed tangles.}
Using Secs. \ref{generalized_state} and \ref{grading}, we obtain the Kauffman bracket of an unoriented framed tangle $D$ by the following formula: 
\begin{equation}
\langle D \rangle = \sum_{{\text{generalized states}}\,S\,{\text{of}}\,D} (-1)^{\tau(S)} A^{\sigma(S) - 2 \tau(S)}, 
\end{equation}
where $\langle D \rangle$ denotes the Kauffman bracket for $D$.  Here we note that $\langle$$0$-framed simple arc with no local maximums or minimums$\rangle$ $=$ $2$, $\langle$$0$-framed arc with either local maximum or minimum$\rangle$ $=$ $-(-1)^{\frac{1}{2}}$$A$ $+$ $(-1)^{\frac{1}{2}}$$A^{-1}$, and $\langle$$0$-framed unknot$\rangle$ $=$ $-A^{2}$ $-A^{-2}$ by the definition (Fig. \ref{elem_tangle_kauffman}).  
\begin{figure}
\qquad\qquad
\includegraphics[width=10cm]{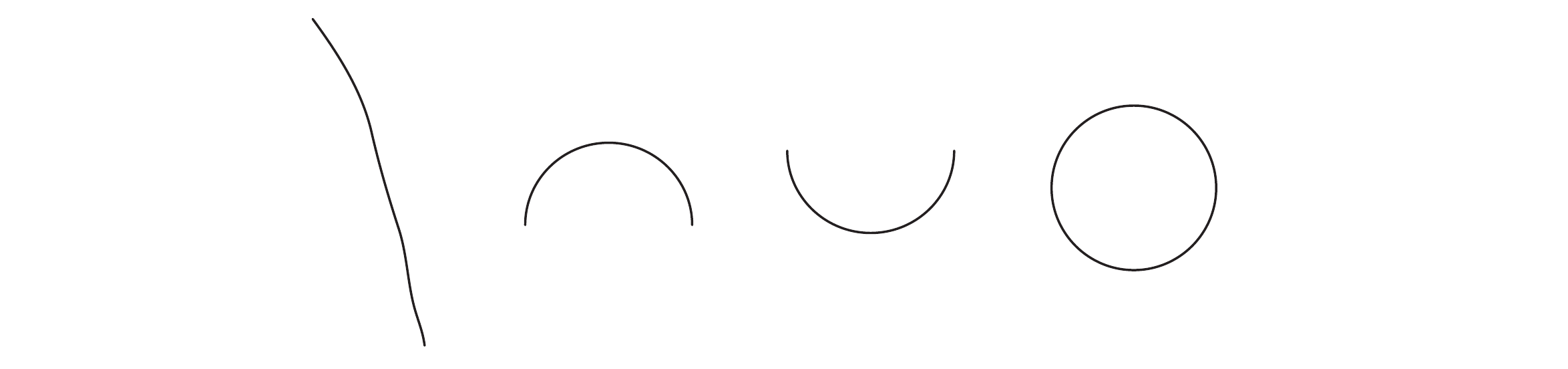}
\caption{Elementary framed tangles.  From the left-hand side, a $0$-framed simple arc with no local maximums or minimums, a $0$-framed arc with a local maximum, a $0$-framed arc with a local minimum, a $0$-framed unknot.  }\label{elem_tangle_kauffman}
\end{figure}

\subsubsection{Boundary operator for tangles.}\label{boundary}
Let us consider the map between enhanced Kauffman states which changes a single positive marker to a negative marker, changes adjacent orientations at a new negative marker, and satisfies the following conditions: 
\begin{itemize}
\item the $A$-grading would be preserved, 
\item the homology grading would decrease by $1$, and
\item the orientations at the end points would be preserved.  
\end{itemize}
If we extend the map linearly, we have a map on $\mathcal{C}_{p, q}(D)$ of a given framed tangle $D$ denoted by $\partial$ (this definition is a natural generalization of the case of links, so we use the same terminology and symbols as in Sec. \ref{def_kh_l}).  
As written in \cite{v2}, the following Theorem is held.  
\begin{theorem}[Viro]
\begin{equation}\label{differential}
{\partial}^{2} = 0.  
\end{equation}
\end{theorem}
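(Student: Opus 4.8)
The plan is to reduce the statement $\partial^2 = 0$ for tangles to a purely local verification, exploiting the linearity of $\partial$ and the fact that $\partial$ acts by changing one marker at a time. First I would observe that, by definition, $\partial(S) = \sum_T (S:T)\,T$ where $T$ ranges over enhanced Kauffman states obtained by switching a single positive marker of $S$ to a negative one (with the prescribed change of orientations and sign). Thus $\partial^2(S)$ is a sum over ordered pairs of distinct crossings $(c_1, c_2)$, each crossing carrying a positive marker in $S$, of the composite of switching first $c_1$ then $c_2$. Since switching two different crossings commutes as an operation on markers, the contributions come in pairs indexed by the two orderings of the set $\{c_1, c_2\}$, and the goal is to show these two contributions cancel.

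The key step is to track the sign $(S:T_1)(T_1:T')$ against $(S:T_2)(T_2:T')$, where $T'$ is the common terminal state obtained by switching both $c_1$ and $c_2$, and $T_1, T_2$ are the two intermediate states. Here the sign has two sources: the $(-1)$ bookkeeping coming from the ordering of the negative markers (the orientation convention from Sec.~\ref{def_kh_l}, under which reversing two negative markers in the ordering multiplies a state by $-1$), and the local orientation data prescribed in Fig.~\ref{frobenius_fig}. For the ordering sign, I would note that appending $c_1$ then $c_2$ to the list of negative markers versus $c_2$ then $c_1$ produces orderings differing by one transposition, hence an odd permutation, giving the factor $-1$ that forces cancellation. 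This is the same mechanism that makes $\partial^2 = 0$ in the link case, and the argument in \cite[Sec.~5.4.D]{v} applies essentially verbatim once the ordering signs are accounted for.

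The genuinely new content in the tangle setting, and the place I expect the main obstacle, is verifying that the \emph{local} orientation assignments on arcs are consistent across the two orderings. For links the intermediate states are determined combinatorially by the Frobenius rules of Fig.~\ref{frobenius_fig}; for tangles the enhanced states carry arc orientations constrained by the boundary condition (orientations at endpoints preserved) and by the requirement that $q(S)$ be preserved while $p(S)$ drops by $1$. I would therefore have to check, crossing-pair by crossing-pair, that the two paths $S \to T_1 \to T'$ and $S \to T_2 \to T'$ land on compatibly oriented intermediate arcs, so that the composite coefficients $(S:T_i)(T_i:T')$ are both nonzero exactly when they should be and carry equal local signs. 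The only subtlety is the case where the two switched crossings share a circle or arc, so that switching one alters the component structure (merge or split) that the other switch sees; here I would enumerate the finitely many local configurations—paralleling the split/merge analysis of the Frobenius structure—and confirm that in each the half-integer local degrees of Fig.~\ref{local_degree} recombine to give matching $A$-gradings and opposite ordering signs, yielding the desired cancellation.
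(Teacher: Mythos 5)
Your proposal is correct and takes essentially the same route as the paper: both reduce to Viro's link-case case analysis (the figures behind \cite[Theorem 5.3.A, Sec.~5.4.D]{v}), obtain cancellation from the transposition sign on the ordering of negative markers, and isolate as the only tangle-specific issue that terms killed by the fixed orientations of open arcs must vanish consistently along both routes $S \to T_1 \to T'$ and $S \to T_2 \to T'$. Your last point is precisely the paper's key observation, phrased there as ``each pointed circle takes $0$ in one route if and only if it takes $0$ in the other route,'' where the pointed circles are the paper's bookkeeping device for arcs.
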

\begin{proof}
We can easily extend the proof of the case of links and the coefficient $\mathbb{Z}_2$ \cite[Proof of Theorem 5.3.A]{v} to that of framed tangles and the coefficient $\mathbb{Z}$ by checking equation (\ref{differential}) along the figures in \cite[Pages 333, 334]{v} as follows: 
\begin{itemize}\label{steps}
\item Replace $+$ and $-$ with $1$ and $x$, respectively; 
\item Give circles with $x$ (resp. $1$) orientations whose degree of Gauss map is $- 1$ (resp. $1$), 
\item to make the enhanced Kauffman state with open arcs, put a point on the circle, except on every crossing of the link that corresponds to an open arc, and 
\item the image $\partial(S)$ for an enhanced Kauffman state $S$ is $0$ if the sign of a pointed circle (= open arc) must be changed,   
\end{itemize}
and where there is ambiguity in the choice of an edge of a circle, to put a point (= to cut the closed circle at the point) that is selected in the second step, we have to consider every possibility.  Following this the modification to the process is done above, we note that each pointed circle takes $0$ in one route if and only if the pointed circle takes $0$ in another route in each diagram of \cite[Pages 333, 334]{v}.  Therefore, in the case of tangles as well, the figures in \cite[Pages 333, 334]{v} with base points are held where some enhanced Kauffman states may be $0$, depending upon which are open arcs.  
\end{proof}
Next, Theorem \ref{kh_tan} is a generalization of the Khovanov homology of framed links (Sec. \ref{def_kh_l}).  
\begin{theorem}[Viro]\label{kh_tan}
Let $D$ be a diagram of an arbitrary unoriented framed tangle.  The homology $H_{*}(\mathcal{C}_{*, q}(D), \partial)$ is an isotopy invariant of unoriented framed tangles.  
\end{theorem}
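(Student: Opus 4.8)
The plan is to establish invariance under isotopy of tangles in the same way that invariance is proved for Khovanov homology of links, namely by verifying that the homotopy type of the complex $\{\mathcal{C}_{*, q}(D), \partial\}$ is unchanged under each of the Reidemeister moves (adapted to the framed, tangle setting, so that the first Reidemeister move appears only in its framed version, which changes the diagram by a curl and multiplies by an overall grading shift rather than leaving the homology literally invariant). Since $\partial^2 = 0$ has already been secured by the preceding Theorem, the complex is well defined, and the entire content of the present statement is the move-by-move stability of its homotopy type. The boundary of the tangle and the orientations at its end points are fixed throughout, so every Reidemeister move we invoke is a move performed in the interior of the diagram, away from $\partial D$.

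First I would reduce the problem to a local statement: because a tangle diagram differs from another representing the same isotopy class of framed tangle by a finite sequence of (framed) Reidemeister moves supported in small disks, and because $\mathcal{C}_{*, q}(D)$ is built locally from the crossings and the Kauffman states near them, it suffices to exhibit, for each move, an explicit chain homotopy equivalence between the complex before and after the move that is the identity outside the disk where the move takes place. Concretely, for the framed first Reidemeister move (addition or removal of a kink) and for the second and third Reidemeister moves, I would write down the local subcomplexes generated by the enhanced Kauffman states in the relevant disk, exactly as in the link case, and produce the maps and the homotopy operator $H$ satisfying $\partial H + H \partial = \mathrm{id} - f$ on the local piece. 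The crucial observation that makes the tangle case no harder than the link case is that the states now carry open arcs whose endpoint orientations are constrained; the three bulleted conditions defining $\partial$ in Section~\ref{boundary}, in particular the preservation of orientations at the end points, guarantee that these local maps respect the boundary data and hence glue to global chain maps.

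Next I would check that the local homotopy equivalences are compatible with the three gradings and with the sign conventions coming from the ordering of the negative markers up to even permutation. The homological grading $p(S) = \tau(S)$ and the $A$-grading $q(S) = \sigma(S) - 2\tau(S)$ are defined by sums of local contributions (the local Gauss-map degrees of Fig.~\ref{local_degree} for $\tau$, and the marker signs for $\sigma$), so I would verify that each local move preserves the $A$-grading and shifts the homological grading appropriately, precisely as recorded for the link case, and that the sign $(S:T)$ attached to $\partial$ behaves coherently under reordering; this is the same bookkeeping that underlies the already-established $\partial^2 = 0$. Once the invariance under the framed Reidemeister moves is verified locally, assembling the global statement is formal, and the identification $H_*(\mathcal{C}_{*, q}(D), \partial)$ as an isotopy invariant of the unoriented framed tangle follows, generalizing Section~\ref{def_kh_l}.

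I expect the main obstacle to be the first Reidemeister move and the endpoint bookkeeping rather than the algebra of the homotopies. In the framed setting an $R1$ kink does not leave the complex invariant on the nose but multiplies it by a grading shift corresponding to the change of writhe/framing; I would have to state precisely which shift occurs and confirm that, after this normalization, the homotopy equivalence holds, so that $H_{*}(\mathcal{C}_{*, q}(D), \partial)$ is a framed invariant (not merely a regular-isotopy artifact). The delicate point is ensuring that the open arcs with their fixed endpoint orientations never obstruct the local moves: one must check that in every case arising near a Reidemeister move the constrained states assemble into a subcomplex isomorphic to the corresponding link-case subcomplex, so that the homotopy built there descends. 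This is exactly the kind of case analysis along the figures of \cite[Pages 333, 334]{v} that was used for $\partial^2 = 0$, and I would carry it out in the same spirit, treating a pointed circle (open arc) uniformly with a genuine circle except for the vanishing condition when an endpoint orientation would be forced to change.
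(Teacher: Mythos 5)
Your plan for the second and third Reidemeister moves is essentially the paper's proof: the paper likewise reduces invariance to moves performed in a disk, imports the chain homotopy $h$ and retraction $\rho$ of \cite{ito} satisfying $h\circ\partial+\partial\circ h=\operatorname{id}-\operatorname{in}\circ\rho$ and $\partial\circ\rho=\rho\circ\partial$, and extends them to tangles by exactly the device you describe: an open arc is treated as a pointed circle, any state in which $\partial$, $h$, or $\rho$ would have to reverse the orientation of an open arc is declared to vanish, and one checks (the paper's Case 1/Case 2 analysis of single versus double Frobenius calculi) that a term vanishes on one side of each identity precisely when its counterpart vanishes on the other side. One precision, though: your claim that ``the constrained states assemble into a subcomplex isomorphic to the corresponding link-case subcomplex'' is stronger than what is true or needed. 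The tangle complex is a truncation of the link-case formulas --- certain generators and certain matrix entries of $\partial$ are set to $0$ --- not a subcomplex of anything, so the homotopy does not ``descend'' automatically; the identities must be re-verified under the vanishing rule, which is exactly the two-sided consistency check the paper performs.

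The genuine defect is your paragraph on the first Reidemeister move. The homology in the statement is not invariant under a single kink --- a kink shifts both gradings --- and ``normalizing'' that shift away would not prove this theorem: it would aim at an invariant of the underlying unframed tangle, which is a different statement, and for unoriented diagrams such a writhe-type normalization is not even well defined, since the sign of a crossing between two distinct components depends on a choice of orientations (this is precisely why Viro's theory is framed and left unnormalized). What framed isotopy requires beyond R2 and R3 is invariance under cancellation of two kinks of \emph{opposite} sign. Your single-kink shift computation can be used to obtain this --- show the two shifts are mutually inverse and the composite equivalence is homotopic to the identity, with no normalization entering --- but the conclusion you draw (``after this normalization \dots\ $H_{*}(\mathcal{C}_{*,q}(D),\partial)$ is a framed invariant'') is backwards: the unnormalized homology is the framed invariant, and the normalized object would be the unframed one. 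To be fair, the paper itself treats only R2 and R3 and leaves the opposite-kink move implicit, so you have put your finger on a real issue; but as written your resolution of it proves the wrong statement.
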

\begin{proof}
By definition, a proof \cite[Secs. 2.2, 2.3]{ito} of the Khovanov homology of links can be interpreted as that of unoriented framed links.  Therefore, here, we extend the proof \cite[Secs. 2.2, 2.3]{ito} of the invariance of the Khovanov homology of unoriented framed link diagrams of the second and third Reidemeister moves to that of unoriented framed tangles.  To apply the proof \cite[Secs. 2.2, 2.3]{ito}, replace $+$ and $-$ with $x$ and $1$, respectively (note that the definitions of signs in \cite{ito} are exchanged for those in \cite{v2}).  Next, the latter three steps of (\ref{steps}) are performed.  What we have to show is the two equalities (Step 1) $h \circ \partial$ $+$ $\partial \circ h$ $=$ $\operatorname{id}$ $-$ $\operatorname{in} \circ \rho$ ($\ast$) and (Step 2) $\partial \circ \rho$ $=$ $\rho \circ \partial$ ($\ast\ast$), where $h$ and $\rho$ are defined as in \cite[Secs. 2.2, 2.3]{ito}, ``$\operatorname{id}$'' is the identity map, and ``$\operatorname{in}$'' is the inclusion map.

(Step 1) To begin with, we can readily notice the following property for $h$.  For any enhanced Kauffman state $S$ that has open arcs, $\partial(S)$ is $0$ (resp: $S$ $=$ $0$), because $S$ has an arc-fixed orientation if and only if $h \circ \partial(S)$ $=$ $0$ (resp: $h(S)$ $=$ $0$) for the same reason.  

(Case 1 of Step 1) Next, we consider the case of $S$ such that the definition of $\rho(S)$ is not concerned with two times of Frobenius calculuses $(p:q):(q:p)$ and $(q:p):(p:q)$.  In this case, if every component of the enhanced Kauffman state $S$ is a circle, of course, the equality ($\ast$) is certainly held, since we have already shown this equality in \cite[Secs. 2.2, 2.3]{ito}.  Then, if the enhanced Kauffman states appear in $(\partial \circ h$ $+$ $h \circ \partial)(S)$, all the states appearing on the left hand side ($\ast$) have to appear in $(\operatorname{id}$ $-$ $\operatorname{in} \circ \rho)(S)$ and the inverse is true.  Moreover, in this case, by this assumption, a single Frobenius calculus determines which of each $S$ survives or not.  In other words, the condition of the survivability depends upon a single Frobenius calculus for both sides.  Then, the equality ($\ast$) is still held in this case.  

(Case 2 of Step 1) Now we consider another case of $S$ such that the definition of $\rho(S)$ is concerned with two times of the Frobenius calculus $(p:q):(q:p)$ and $(q:p):(p:q)$.  If the case $(p:q):(q:p)$ $=$ $0$ or $(q:p):(p:q)$ $=$ $0$, the discussion returns to Case 1.  If $(p:q)$ and $(q:p)$ must survive, the case is similar to Case 1, and the condition of survivability depends on a single Frobenius calculus, and the discussion returns to Case 1.  Consider the case that satisfies $(p:q)$ and $(q:p)$ $=$ $0$, because the orientation of some arcs are fixed if and only if $S$ with $(p:q):(q:p)$ and $(q:p):(p:q)$ $=$ $0$ for the same reason.  In this case, if we take $S$ with $(p:q)$ and $(q:p)$ $=$ $0$, again, the condition of survivability depends on a single Frobenius calculus, and the discussion returns to Case 1, and if $S$ with $(p:q)$ and $(q:p)$ $\neq$ $0$, this is the same situation as that of the links.  We check every possibility of the case concerned with $(p:q):(q:p)$ and $(q:p):(p:q)$; all above cases were checked above in this manner.  

(Step 2) What must be done is to check all the cases, but some key points are  represented below.  

As we see in (Case 2) of (Step 1), for an enhanced Kauffman state $S$, two Frobenius calculuses $(p:q):(q:p)$ and $(q:p):(p:q)$ produced by a single Frobenius calculuses $(p:q)$ and $(q:p)$ concerned with the definition of $\rho(S)$ is survived only if $S$ with $(p:q)$ and $(q:p)$ survives.  We can check ($\ast\ast$) easily in the case of the second Reidemeister move.  For the second Reidemeister move, except for the case of the enhanced Kauffman state with two positive markers that concerns the move (to show the case, we repeatedly use the formulae of definitions of $\rho$), we use the rule ``multiplying the unit'' of the Frobenius calculus which is concerned with a circle with $1$.  For the third Reidemeister move, except for the case in which the markers are positive, negative, and positive for $a$, $b$, and $c$, respectively, of the picture in \cite[Section 2.2, Formula 2.4]{ito}, we use the rules that changing a marker and the behavior of a circle with $1$ is that of the unit.  The left-behind case is similar to (Case 2) of (Step 1).  The two Frobenius calculuses concerned with the non-zero enhanced Kauffman bracket $S$ with $(p:q):(q:p)$ and $(q:p):(p:q)$ appearing in the image $\rho(S)$ is well-defined only if the non-zero Kauffman bracket $S$ with $(p:q)$ and $(q:p)$ is well-defined.  Then, the discussion of two Frobenius calculuses is reduced to one Frobenius calculus, as in (Case 1) of (Step 1).  At first glance, the discussion of ($\ast$) is still about two Frobenius calculuses, but the discussion is reduced to that of one Frobenius calculus in the end.  

Here, this proof has been completed.  
\end{proof}
\begin{remark}
A proof of the invariance of the Khovanov homology of links in a systematic context will be given elsewhere.  
\end{remark}

\subsubsection{$R$-matrix and enhanced Kauffman states}\label{r_matrix}
In this section, we comment upon the relation $R$-matrix and enhanced Kauffman states.  $A$-grading can be localized, as in Fig. \ref{bolz_w} (see also \cite{v2}).  
\begin{figure}
\qquad\qquad
\includegraphics[width=10cm]{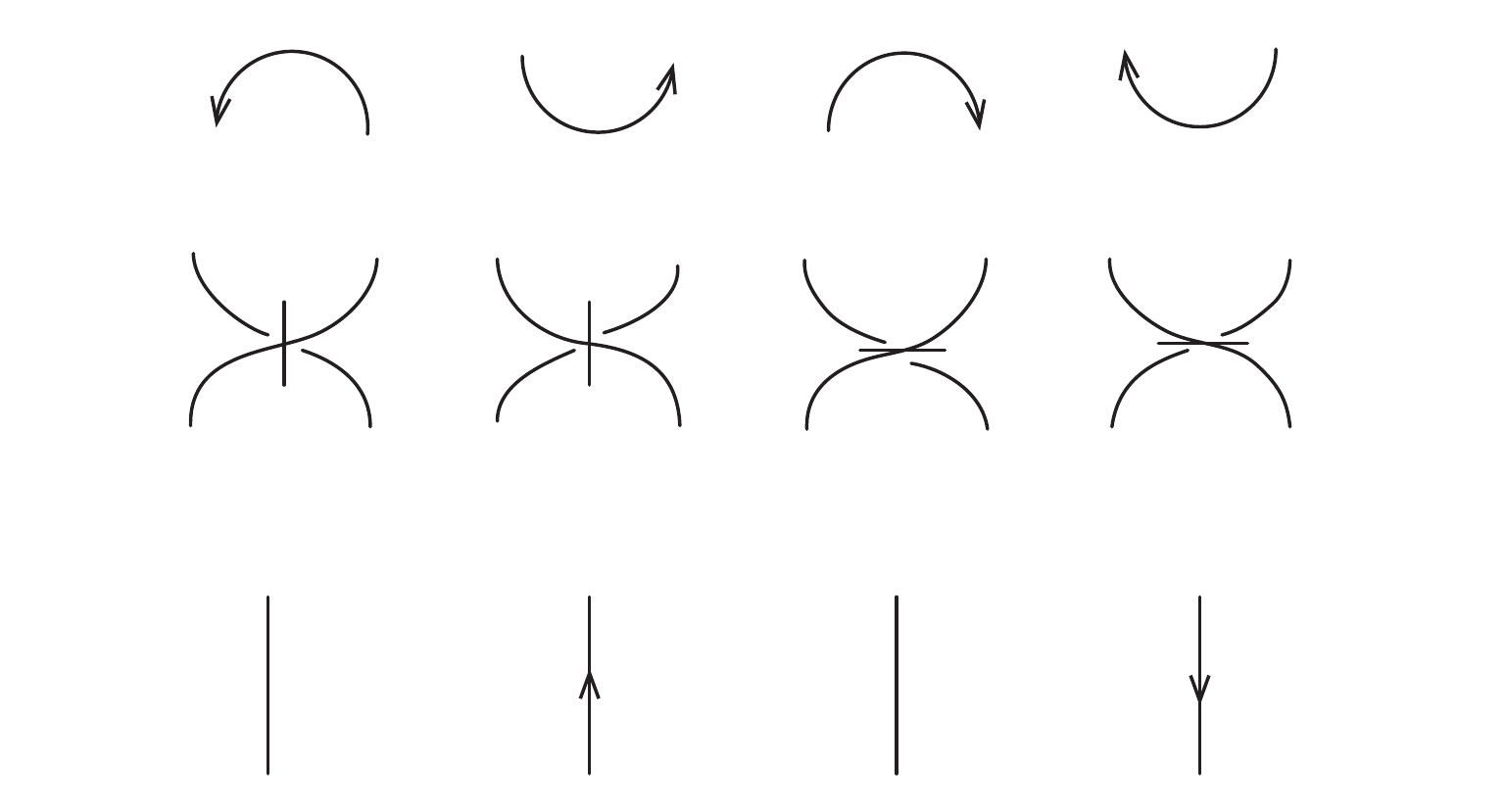}
\begin{picture}(0,0)
\put(-250,124){$n_{0 1}$ $=$ $A$}
\put(-195,124){$u_{0 1}$ $=$ $- A$}
\put(-135,124){$n_{1 0}$ $=$ $-A^{-1}$}
\put(-72,124){$u_{1 0}$ $=$ $A^{-1}$}
\put(-218,67){$j$}
\put(-253,67){$i$}
\put(-215,112){$j$}
\put(-253,112){$i$}
\put(-158,67){$j$}
\put(-193,67){$i$}
\put(-155,112){$j$}
\put(-193,112){$i$}
\put(-98,67){$j$}
\put(-133,67){$i$}
\put(-95,112){$l$}
\put(-133,112){$k$}
\put(-38,67){$j$}
\put(-73,67){$i$}
\put(-35,112){$l$}
\put(-73,112){$k$}
\put(-255,52){$A^{i j}_{i j}$ $=$ $A$}
\put(-205,52){$(A^{-1})^{i j}_{i j}$ $=$ $A^{-1}$}
\put(-121,52){$A^{k l}_{i j}$}
\put(-62,52){$B^{k l}_{i j}$}
\put(-233,22){$1$}
\put(-205,22){$=$}
\put(-110,22){$0$}
\put(-86,22){$=$}
\end{picture}
\caption{The indices $i$, $j$, $k$, $l$ are the element of $\{0, 1\}$.  The edges with indices $0$ and $1$ fix the orientations by using the rule on the last line in the figure.  We set $A^{0 1}_{0 1}$ $=$ $-A$, $A^{1 0}_{1 0}$ $=$ $- A^{-3}$, $A^{1 0}_{0 1}$ $=$ $A^{0 1}_{1 0}$ $=$ $A^{-1}$, $B^{0 1}_{0 1}$ $=$ $- A^{3}$, $B^{1 0}_{1 0}$ $=$ $- A^{-1}$ and, $B^{1 0}_{0 1}$ $=$ $B^{0 1}_{1 0}$ $=$ $A$.  }\label{bolz_w}
\end{figure}
On the other hand, the $R$-matrix of the Kauffman bracket of links consists of $(A^{k l}_{i j})$ and $(B^{k l}_{i j})$ (cf. \cite{o}).  In fact, using Fig. \ref{bolz_w}, 
\begin{equation}
\left(
\begin{matrix}
A^{0 0}_{0 0} & A^{0 1}_{0 0} & A^{1 0}_{0 0} & A^{1 1}_{0 0}\\
A^{0 0}_{0 1} & A^{0 1}_{0 1} & A^{1 0}_{0 1} & A^{1 1}_{0 1}\\
A^{0 0}_{1 0} & A^{0 1}_{1 0} & A^{1 0}_{1 0} & A^{1 1}_{1 0}\\
A^{0 0}_{1 1} & A^{0 1}_{1 1} & A^{1 0}_{1 1} & A^{1 1}_{1 1}
\end{matrix}
\right) = \left(
\begin{matrix}
A & 0 & 0 & 0 \\
0 & 0 & A^{-1} & 0 \\
0 & A^{-1} & A - A^{-3} & 0 \\
0 & 0 & 0 & A
\end{matrix}
\right)
\end{equation}
and
\begin{equation}
\left(
\begin{matrix}
B^{0 0}_{0 0} & B^{0 1}_{0 0} & B^{1 0}_{0 0} & B^{1 1}_{0 0}\\
B^{0 0}_{0 1} & B^{0 1}_{0 1} & B^{1 0}_{0 1} & B^{1 1}_{0 1}\\
B^{0 0}_{1 0} & B^{0 1}_{1 0} & B^{1 0}_{1 0} & B^{1 1}_{1 0}\\
B^{0 0}_{1 1} & B^{0 1}_{1 1} & B^{1 0}_{1 1} & B^{1 1}_{1 1}
\end{matrix}
\right) = \left(
\begin{matrix}
A^{-1} & 0 & 0 & 0 \\
0 & A^{-1} - A^{3} & A & 0 \\
0 & A & 0 & 0 \\
0 & 0 & 0 & A^{-1}
\end{matrix}\right).  
\end{equation}
The matrix $(A^{k l}_{i j})$ is none other than the $R$-matrix corresponding to a crossing in the first figure from the left in Fig. \ref{crossing}, and $(B^{k l}_{i j})$ is the inverse of $(A^{k l}_{i j})$, corresponding to the second from the left in Fig. \ref{crossing}. 
\begin{figure}
\qquad\qquad\qquad\qquad\quad
\includegraphics[width=7cm]{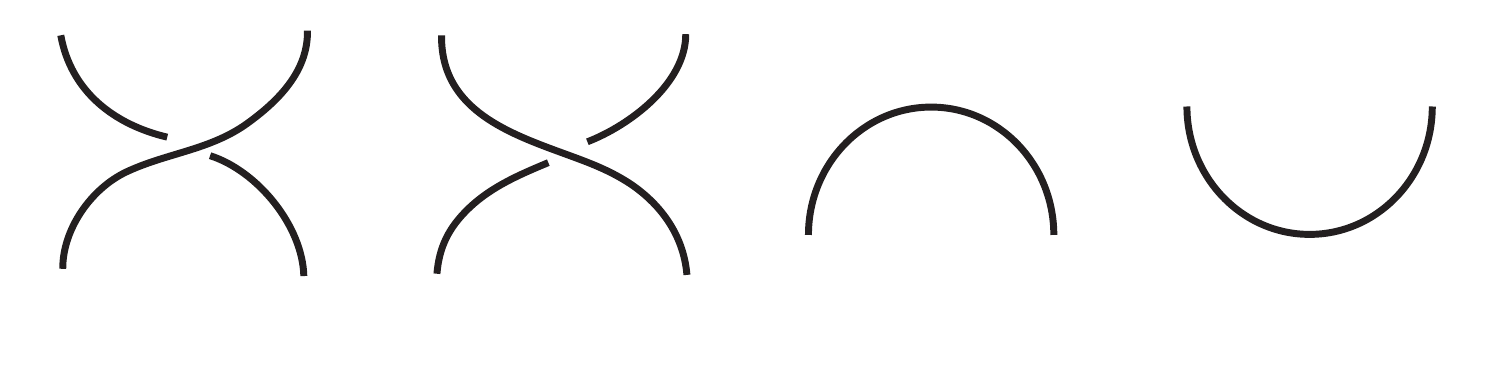}
\begin{picture}(0,0)
\put(-180,-5){$E_{1}$}
\put(-130,-5){$E_{2}$}
\put(-80,-5){$E_{3}$}
\put(-28,-5){$E_{4}$}
\put(-195,50){$k$}
\put(-164,50){$l$}
\put(-195,6){$i$}
\put(-164,6){$j$}
\put(-145,50){$k$}
\put(-111,50){$l$}
\put(-145,6){$i$}
\put(-111,6){$j$}
\put(-93,12){$i$}
\put(-60,12){$j$}
\put(-41,42){$i$}
\put(-9,42){$j$}
\end{picture}
\caption{The elementary part of diagrams with indices.  Each of the elementary part corresponds to the letter $E_n$ ($n$ $=$ $1$, $2$, $3$, or $4$) directly under the diagram.  }\label{crossing}
\end{figure}
Moreover, let us consider the matrices $n$ $=$ 
$\left(
\begin{matrix}
n_{0 0}, & n_{0 1}, & n_{1 0}, & n_{1 1}
\end{matrix}
\right)$ $=$ $\left(
\begin{matrix}
0, & A, & - A^{-1}, & 0 
\end{matrix}
\right)$ and $u$ $=$ 
$\left(
\begin{matrix}
0 \\ - A \\ A^{-1} \\ 0
\end{matrix}
\right)$.  The non-trivial elements of these matrices are also given in Fig. \ref{bolz_w}.  First, set $R$ $=$ $(R^{k l}_{i j})$ $=$ $(A^{k l}_{i j})$ and then, $R^{-1}$ $=$ $(B^{k l}_{i j})$.  Now we define the map $w :$ $\{E_n\}$ $\to$ $\mathbb{Z}[A, A^{-1}]$.  The $w(E_n)$ is $0$ if the orientations of $E_n$ fixed by the indices are not well-defined.  If the orientations of $E_n$ are well-defined, we define $w(E_n)$ as follows:  
\begin{equation}
w(E_1) = R^{k l}_{i j},\ w(E_2) = (R^{-1})^{k l}_{i j},\ w(E_3) = n_{i j},\ w(E_4) = u_{i j}.  
\end{equation}
\cite{v2} implies the following Theorem (also see \cite[Theorem 3.6]{o}).  
\begin{theorem}[Viro]
The elementary parts $E_n$ of a framed link diagram $D$ are defined by Fig. \ref{crossing}.  Let $S$ be the enhanced Kauffman state, as defined by the correspondence between the indices and orientations of the edges, and by applying the case of framed links to that of tangles.  For the Kauffman bracket $\langle D \rangle$ of a framed link diagram $D$, the following formula $(\ref{kauffman_b})$ is held $:$ 
\begin{equation}\label{kauffman_b}
\langle D \rangle = \sum_{S} \prod_{E_n} w(E_n).  
\end{equation}
\end{theorem}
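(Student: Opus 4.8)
The plan is to identify the $R$-matrix state sum on the right of $(\ref{kauffman_b})$ with the defining state sum $\langle D\rangle=\sum_{S}(-1)^{\tau(S)}A^{\sigma(S)-2\tau(S)}$, by exploiting the fact that both exponents are local. Indeed $\sigma(S)$ is a sum of marker signs over the crossings, and $\tau(S)$ is by construction (Fig.~\ref{local_degree}) a sum of local Gauss-map degrees over all elementary parts $E_n$. Hence, for a fixed assignment of indices $i,j,k,l\in\{0,1\}$ to the edges of $D$ (equivalently an orientation of every arc), the monomial $(-1)^{\tau}A^{\sigma-2\tau}$ factors as a product of local contributions $f(E_n):=(-1)^{\tau_{\mathrm{loc}}}A^{-2\tau_{\mathrm{loc}}}$ at the caps and cups (where $\sigma_{\mathrm{loc}}=0$) and $(-1)^{\tau_{\mathrm{loc}}}A^{\sigma_{\mathrm{loc}}-2\tau_{\mathrm{loc}}}$ at the crossings. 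I would organise the proof around comparing each $f(E_n)$ with the weight $w(E_n)$ of Fig.~\ref{bolz_w}.

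First I would treat the crossings. An index labelling of the four edges of a crossing does not determine its marker: depending on the labelling, the identity smoothing alone, the cup--cap smoothing alone, both, or neither are orientation-compatible. I would therefore group the enhanced states of $D$ by their underlying edge-orientation $S$ and sum over the compatible markers at each crossing independently. A short computation shows that this local marker-sum is exactly $A\,\delta^{k}_{i}\delta^{l}_{j}+A^{-1}n_{ij}u_{kl}$, i.e. the entry $R^{kl}_{ij}=w(E_1)$ for a positive crossing and $(R^{-1})^{kl}_{ij}=w(E_2)$ for a negative one; in particular the vanishing entries (such as $R^{01}_{01}=A-A=0$) arise as cancellations between the two markers, and the $0$-weight of an orientation-incompatible labelling is automatic.

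The subtle point, which I expect to be the main obstacle, concerns the caps and cups. Comparing $f(E_3),f(E_4)$ with the values in Fig.~\ref{bolz_w} shows that $n_{ij}$ and $u_{ij}$ agree with the local bracket factors only up to a factor of $(-1)^{1/2}$ and $-(-1)^{1/2}$ respectively---precisely the formal square roots appearing in the cap and cup brackets computed in Fig.~\ref{elem_tangle_kauffman}. I would resolve this by a global parity count: within each cup--cap smoothing of a crossing the two half-integer degrees pair up, contributing $(-1)^{1/2}\cdot\bigl(-(-1)^{1/2}\bigr)=1$, while for the genuine maxima and minima of the closed diagram $D$ the numbers of maxima and minima coincide, so the accumulated factor $\bigl((-1)^{1/2}\bigr)^{\#\max}\bigl(-(-1)^{1/2}\bigr)^{\#\min}$ is again $1$. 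Thus all the formal phases cancel and $\prod_{E_n}w(E_n)=\prod_{E_n}f(E_n)$ for each fixed marker distribution.

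Assembling these observations, for a fixed edge-orientation $S$ the product $\prod_{E_n}w(E_n)$---after expanding each crossing weight into its two markers via the identity above---equals $\sum_{\text{markers compatible with }S}(-1)^{\tau}A^{\sigma-2\tau}$, where the phase cancellation of the previous paragraph makes each summand genuinely equal to the local product $\prod f(E_n)$. Summing over all edge-orientations $S$ then reconstitutes the full sum over enhanced Kauffman states, giving $\sum_{S}\prod_{E_n}w(E_n)=\sum_{\text{enhanced }S}(-1)^{\tau(S)}A^{\sigma(S)-2\tau(S)}=\langle D\rangle$, which is $(\ref{kauffman_b})$. The only genuinely non-formal input is the global equality $\#\max=\#\min$ for the Morse-positioned link diagram, without which the $(-1)^{1/2}$ factors would survive.
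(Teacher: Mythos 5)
Your argument is correct, but there is nothing in the paper to compare it with: the paper gives no proof of this theorem at all.  The statement is introduced by ``\cite{v2} implies the following Theorem (also see \cite[Theorem 3.6]{o})'' and is followed immediately by remarks, so the result is deferred entirely to Viro's talk file and to Ohtsuki's operator-invariant theorem.  What you prove directly is, in effect, the content of those citations, and your route is exactly the one the paper's own trailing remarks sketch without proof: your key identity $R^{kl}_{ij}=A\,\delta^{k}_{i}\delta^{l}_{j}+A^{-1}n_{ij}u_{kl}$ (with the powers of $A$ swapped for $(R^{-1})^{kl}_{ij}$) is precisely the paper's splitting $(A^{kl}_{ij})=AI+A^{-1}(\,\cdot\,)$, whose off-diagonal block has entries $n_{ij}u_{kl}$, and your intermediate expansion over marker choices is precisely the refined formula $\langle D\rangle=\sum_{S}\prod_{E^{m}_{n}}w(E^{m}_{n})$ of the last remark.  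The genuinely new material you supply, which neither the theorem's statement nor the remarks address, is the bookkeeping of the formal phases: the weights $n_{ij}$ and $u_{ij}$ differ from the local bracket factors $(-1)^{\pm1/2}A^{\mp1}$ by a constant phase depending only on cap versus cup (not on orientation), so the discrepancy cancels in the cap--cup pair created inside each horizontally smoothed crossing, and for the diagram's genuine extrema it cancels by the Morse-theoretic equality $\#\max=\#\min$, valid because $D$ is a closed link diagram.  That last observation is also exactly why the theorem is stated for links rather than tangles --- compare the paper's value $-(-1)^{1/2}A+(-1)^{1/2}A^{-1}$ for a bare arc, which is not $n_{01}+n_{10}=A-A^{-1}$ --- and your proof makes this restriction visible where the paper leaves it implicit.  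In short, where the paper cites, you prove; the two are compatible, and your write-up could serve as the missing proof.
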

\begin{remark}
The linear maps $R$, $R^{-1}$, $n$, and $u$ define the operator invariant of framed tangles \cite[Theorem 3.6]{o}.  
\end{remark}
\begin{remark}
The boundary operator sends an element of the diagonal matrix to an element of the other part of matrices $(A^{k l}_{i j})$ and $(B^{k l}_{i j})$ when we divide each matrix into two parts: 
\begin{equation}
(A^{k l}_{i j}) = A \left(
\begin{matrix}
1&0&0&0\\
0&1&0&0\\
0&0&1&0\\
0&0&0&1
\end{matrix}\right) + A^{-1}
\left(
\begin{matrix}
0&0&0&0\\
0&- A^{2}&1&0\\
0&1&- A^{-2}&0\\
0&0&0&0
\end{matrix}
\right)
\end{equation}
and
\begin{equation}
(B^{k l}_{i j}) = 
A^{-1}
\left(
\begin{matrix}
1&0&0&0\\
0&1&0&0\\
0&0&1&0\\
0&0&0&1
\end{matrix}
\right) + A
\left(
\begin{matrix}
0&0&0&0\\
0&- A^{2}&0&0\\
0&0&- A^{-2}&0\\
0&0&0&0
\end{matrix}
\right)
\end{equation}
\end{remark}
\begin{remark}
Let $E^{0}_{1}$ and $E^{1}_{2}$ (resp. $E^{1}_{1}$ and $E^{0}_{2}$) be $E_n$ with a positive (resp. negative) marker, as in Fig. \ref{bolz_w} and let $E_3$ $=$ $E^{0}_{3}$, $E_4$ $=$ $E^{0}_{4}$, and $I$ is the identity matrix.  Set $w(E^{0}_1)$ $=$ $A I$, $w(E^{1}_1)$ $=$ $(A^{k l}_{i j})$ $-$ $A I$, $w(E^{0}_{2})$ $=$ $A^{-1} I$, $w(E^{1}_2)$ $=$ $(B^{k l}_{i j})$ $-$ $A^{-1} I$, $w(E^{0}_{3})$ $=$ $n_{i j}$, and $w(E^{0}_{4})$ $=$ $u_{i j}$.  \cite{v2} implies that for an arbitrary framed link diagram $D$, 
\begin{equation}
\langle D \rangle = \sum_{S} \prod_{E^{m}_{n}} w(E^{m}_{n}).  
\end{equation}
\end{remark}


\end{document}